\def\misajour{August 23,  2011}
\theoremstyle{thmit} % Numbered and Italic
\newtheorem{thm}{Theorem}[section]
\newtheorem{lem}[thm]{Lemma}
\newtheorem{cor}[thm]{Corollary}
\newtheorem{prop}[thm]{Proposition}
\theoremstyle{thmrm} % Numbered and Roman
\newtheorem{exa}{Example}
\newtheorem*{oldproof}{Proof}
\renewenvironment{proof}[1][{}]{\begin{oldproof}[#1]}{\qed\end{oldproof}}
\def\C{\mathbf{C}} 
\def\N{\mathbf{N}} 
\def\Q{\mathbf{Q}} 
\def\R{\mathbf{R}}
\def\Z{\mathbf{Z}}
\def\Qbar{\overline{\Q}}
\def\Tr{\mathrm{Tr}}
\newcommand{\ZK}{\Z_K}
\def\rmh{{\mathrm{h}}}
\def\calB{\mathcal {B}}
 \def\Id{{\mathrm{Id}}}
\def\house#1{\setbox1=\hbox{$\,#1\,$}%
\dimen1=\ht1 \advance\dimen1 by 2pt \dimen2=\dp1 \advance\dimen2 by 2pt
\setbox1=\hbox{\vrule height\dimen1 depth\dimen2\box1\vrule}%
\setbox1=\vbox{\hrule\box1}%
\advance\dimen1 by .4pt \ht1=\dimen1
\advance\dimen2 by .4pt \dp1=\dimen2 \box1\relax}
\title{Approximation of an algebraic number by products of rational numbers and units }
\author{Claude LEVESQUE}
\address{D\'{e}partement de math\'{e}matiques et de statistique,\\
 Universit\'{e} Laval,\\
Qu\'{e}bec (Qu\'{e}bec),\\
CANADA G1V 0A6\\
\href{Claude.Levesque@mat.ulaval.ca}{\tt Claude.Levesque@mat.ulaval.ca}}
\author{Michel WALDSCHMIDT}
\address{Institut de Math\'{e}matiques de Jussieu, \\
Universit\'{e} Pierre et Marie Curie (Paris 6),\\
4 Place Jussieu, \\
F -- 75252 PARIS Cedex 05, FRANCE\\
\href{miw@math.jussieu.fr}{\tt miw@math.jussieu.fr}\\
\href{http://www.math.jussieu.fr/~miw/}{\tt http://www.math.jussieu.fr/$\sim$miw/}
 }
\begin{document}
 
\maketitle

 \null
 
 \vskip - 7 true cm 
\hfill
{\small date de mise à jour: \it \misajour}

\null
 \vskip 5 true cm 
\hfill{\it Dedicated to Alf van der Poorten}
\bigskip

\begin{abstract} 
We relate a previous result of ours on families of diophantine equations having only trivial solutions 
with a result on the approximation of an algebraic number by products of rational numbers and units. We compare this approximation, on the one hand with a Liouville type estimate, on the other hand with an estimate arising from a lower bound for a linear combination of logarithms.

\end{abstract}
 \bigskip
 
{\sc American Mathematical Society 2010 Mathematics subject classification}: 
11D59, %(Thue-Mahler equations), %\\ \null\qquad 
11J87, %(Schmidt Subspace Theorem and applications), %\\ \null\qquad 
11J68, %(Approximation to algebraic numbers). %\\ \null\qquad 
11J86, %Linear forms in logarithms; Baker’s method 
11J17 %Approximation by numbers from a fixed field
\\
\null\qquad 
{\sc Keywords}: Diophantine equations, Diophantine approximation, Liouville inequality, Families of Thue--Mahler equations

%-:-:-:-:-:-:-:-:-:-:-:-:-:-:-:-:-:-:-:-:-:-:-:-:-:-:-:-:-:-:-:-:-:-:-:-:-:-:-:-:-:-:-:-:-:-:-:-:-:-:-:-:-:-:-:-:-:-:-:-:-:-:-:-:-:-:
\section{Introduction}\label{S:Introduction}
 
 In a previous paper \cite{LW}, we proved that certain families of diophantine equations have only trivial solutions. 
 In this note (Theorem \ref{thm:principal}), we show how to deduce from our results on families of diophantine equations \cite{LW} some results on the approximation of an algebraic number by products of rational numbers and units. Since the proofs rest on Schmidt's Subspace Theorem, these results are non--effective. They improve elementary estimates (Proposition \ref{Proposition:VarianteLiouville}) obtained along the lines of Liouville's arguments. A different type of estimate (Theorem $\ref{thm:Effectif}$), which is effective, is achieved by means of a lower bound for a linear combination of logarithms of algebraic numbers.

%-:-:-:-:-:-:-:-:-:-:-:-:-:-:-:-:-:-:-:-:-:-:-:-:-:-:-:-:-:-:-:-:-:-:-:-:-:-:-:-:-:-:-:-:-:-:-:-:-:-:-:-:-:-:-:-:-:-:-:-:-:-:-:-:-:
\section{ A variant of the Liouville inequality }\label{S:VarianteLiouville}

Rational numbers will be written $p/q$ with $q>0$ and $\gcd(p,q)=1$, 
(hence with $q=1$ in case $p=0$). 
%and with $q=1$ in case $p=0$. 
When $\alpha$ is an algebraic number of degree $d$, its minimal polynomial will be denoted by 
\begin{equation}\label{Equation:PolynomeMinimal}
f(X)=a_0X^d+a_1X^{d-1}+\cdots+a_d\in\Z[X]
\end{equation}
where $a_0>0$. In $\C[X]$, this polynomial splits as 
 $$
f(X)\;=\;a_0\prod_{\sigma}(X-\sigma(\alpha)) 
$$
where $\sigma$ in the product runs through the set of embeddings of the field $ K:=\Q(\alpha)$ in $\C$. 
We denote by $\house{\alpha}$ the maximum complex modulus of the algebraic conjuguates of $\alpha$ in $\C$:
$$
\house{\alpha}=\max_{\sigma} |\sigma(\alpha)|. 
$$ 
The absolute logarithmic height of $\alpha$ (see \cite{GL326}, Chap.~3) is 
 $$
 \rmh(\alpha)=\log a_0+\sum_{\sigma} \log \max \bigl\{1,\; |\sigma(\alpha)| \bigr\}.
 $$
The proof of the next result is close to that of Liouville. 
 
\begin{prop} 
 \label{Proposition:VarianteLiouville}
Let $\alpha\in\C$ be an algebraic number of degree $d$ with minimal polynomial $(\ref{Equation:PolynomeMinimal}$). Then for any $p/q\in\Q$ and for any unit $\varepsilon$ of $\Q(\alpha)$ such that $\varepsilon\alpha\not= p/q$, we have
$$
\left | \alpha - \frac{p}{q} \cdot \varepsilon^{-1}\right | 
\geq \frac{\kappa_1}{q^d \; \,{\house{\varepsilon}\,}^{d-1}|\varepsilon|}, 
\quad \mbox{with}\quad 
\kappa_1=\frac{1}{a_0(2\, \house{\alpha}\, +1 )^{d-1}}\cdotp
$$
\end{prop}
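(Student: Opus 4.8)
The plan is to reduce the statement to a lower bound for $|p/q-\varepsilon\alpha|$ and then run a Liouville-style argument on the single algebraic number $\gamma:=\varepsilon\alpha$. Since $\varepsilon$ is a unit we have $|\alpha-(p/q)\varepsilon^{-1}|=|\varepsilon|^{-1}\,|\varepsilon\alpha-p/q|$, so it suffices to prove the ``$\gamma$--version'' $|\gamma-p/q|\ge T$, where $T:=1/\bigl(a_0 q^d((2\house{\alpha}+1)\house{\varepsilon})^{d-1}\bigr)$; dividing by $|\varepsilon|$ then reproduces the asserted inequality with $\kappa_1=1/(a_0(2\house{\alpha}+1)^{d-1})$. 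I would write the primitive minimal polynomial of $\gamma$ over $\Z$ as $g_0$, of degree $m$ and leading coefficient $a_0'$, and factor $g_0(X)=a_0'\prod_i(X-\gamma^{(i)})$ over its $m$ distinct conjugates, with $\gamma^{(1)}=\gamma$.

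The first key point is integrality. Because $\alpha$ has minimal polynomial $(\ref{Equation:PolynomeMinimal})$, the number $a_0\alpha$ is an algebraic integer, hence so is $\varepsilon(a_0\alpha)=a_0\gamma$. The standard fact that the leading coefficient of the primitive minimal polynomial is the exact ``denominator'' of an algebraic number then gives $a_0'\mid a_0$, in particular $a_0'\le a_0$; moreover each conjugate satisfies $|\gamma^{(i)}|=|\sigma(\varepsilon)\sigma(\alpha)|\le\house{\varepsilon}\,\house{\alpha}$ for the embedding $\sigma$ realizing it. The Liouville step is then routine: $q^m g_0(p/q)=a_0'p^m+\cdots+g_m q^m$ is a rational integer, and it is nonzero since $\gamma\ne p/q$ and no conjugate of $\gamma$ can be rational unless $\gamma$ itself is. Hence $|q^m g_0(p/q)|\ge 1$, i.e. $\prod_i|p/q-\gamma^{(i)}|\ge 1/(a_0'q^m)$, whence $|\gamma-p/q|\ge\bigl(a_0'q^m\prod_{i\ge2}|p/q-\gamma^{(i)}|\bigr)^{-1}$.

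It remains to bound the conjugate factors, and here I would first dispose of the easy case: if $|\gamma-p/q|>\house{\varepsilon}$ then, since $\house{\varepsilon}\ge1$ (a unit has $\prod_\sigma|\sigma(\varepsilon)|=1$) while $T\le1$ (its denominator being a product of factors each $\ge1$), the desired inequality $|\gamma-p/q|\ge T$ holds at once. So I may assume $|\gamma-p/q|\le\house{\varepsilon}$, and then the triangle inequality gives, for each conjugate, $|p/q-\gamma^{(i)}|\le|p/q-\gamma|+|\gamma|+|\gamma^{(i)}|\le\house{\varepsilon}+2\house{\varepsilon}\house{\alpha}=(2\house{\alpha}+1)\house{\varepsilon}$. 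Feeding $\prod_{i\ge2}|p/q-\gamma^{(i)}|\le\bigl((2\house{\alpha}+1)\house{\varepsilon}\bigr)^{m-1}$ into the previous display yields $|\gamma-p/q|\ge\bigl(a_0'q^m\bigr)^{-1}\bigl((2\house{\alpha}+1)\house{\varepsilon}\bigr)^{-(m-1)}$.

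Finally I would pass from degree $m$ to degree $d$: because $a_0'\le a_0$, $m\le d$, $q\ge1$ and $(2\house{\alpha}+1)\house{\varepsilon}\ge1$, each replacement only enlarges the denominator, so this degree-$m$ bound dominates $T$; restoring the factor $|\varepsilon|^{-1}$ then gives the statement. The one genuinely delicate point I expect is the integrality bookkeeping—checking $a_0'\mid a_0$, and verifying that allowing $\gamma$ to generate a proper subfield of $\Q(\alpha)$ (so that $m<d$) only helps rather than hurts, since all the resulting slack runs in the favourable direction; everything else is the classical Liouville computation.
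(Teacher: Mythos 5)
Your overall route is the same as the paper's: reduce to bounding $|\varepsilon\alpha-p/q|$ from below, evaluate an integer polynomial at $p/q$, note that the resulting rational integer is nonzero (no conjugate of $\varepsilon\alpha$ can equal $p/q$) hence of absolute value at least $1$, and bound each conjugate factor by $(2\house{\alpha}+1)\house{\varepsilon}$ after discarding an easy case. The paper does this with the degree-$d$ twisted polynomial $f_\varepsilon(X)=a_0\prod_\sigma(X-\sigma(\varepsilon\alpha))$, whereas you use the primitive minimal polynomial $g_0$ of $\gamma=\varepsilon\alpha$, of degree $m\le d$ with leading coefficient $a_0'$, and relax $m$ to $d$ at the end; that variation is legitimate (indeed $f_\varepsilon=g_0^{d/m}$), and your easy-case threshold $\house{\varepsilon}$ in place of the paper's $1$ is harmless.

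There is, however, one genuine flaw, precisely at the point you flagged as delicate: the ``standard fact'' you invoke is false. The leading coefficient $a_0'$ of the primitive minimal polynomial is \emph{not} the exact denominator of an algebraic number: for $\gamma=\sqrt{3}/2$ the minimal polynomial is $4X^2-3$, yet $2\gamma=\sqrt{3}$ is already an algebraic integer, so $c\gamma$ integral does not imply $a_0'\mid c$. From ``$a_0\gamma$ is an algebraic integer'' alone, Gauss's lemma applied to the monic minimal polynomial of $a_0\gamma$ only yields $a_0'\mid a_0^m$, hence $a_0'\le a_0^m$, which would degrade your constant from $\kappa_1$ to roughly $a_0^{-d}(2\house{\alpha}+1)^{-(d-1)}$. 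The divisibility $a_0'\mid a_0$ you need is nevertheless true here, but its proof must use that $\varepsilon$ is a \emph{unit}, not merely that $a_0\gamma$ is integral: the leading coefficient of the primitive minimal polynomial of an algebraic number equals the absolute norm of its denominator ideal, computed in the field it generates; since $\varepsilon\in\ZK^\times$, the denominator ideal of $\varepsilon\alpha$ in $\ZK$ coincides with that of $\alpha$, whose norm from $K$ is $a_0$, and comparing norms taken from $K$ and from $L=\Q(\varepsilon\alpha)$ gives $a_0=(a_0')^{d/m}$, whence $a_0'\mid a_0$. With that repair your proof is complete and essentially identical to the paper's; note that the paper sidesteps this bookkeeping by never introducing $m$, though the analogous integrality claim it uses, $F_\varepsilon(p,q)\in\Z$, rests on the same denominator-ideal computation.
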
 

The conclusion can also be written: {\it for any $p/q\in\Q$ and for any unit $\varepsilon$ of $\Q(\alpha)$ such that $\varepsilon\alpha\not= p/q$, we have}
$$ 
 | q \varepsilon\alpha - p | 
\geq \frac{\kappa_1}{ (q \, \house{\varepsilon}\,)^{d-1}}\cdotp
$$ 
Proposition $\ref{Proposition:VarianteLiouville}$  holds for any algebraic integer $\varepsilon$ lying in $\Q(\alpha)$, not only units, provided that we take into account the leading coefficient  of the minimal polynomial of $\varepsilon$.  Indeed, from Proposition 3.14 of  \cite{GL326} (which follows from the fact hat the norm of a non--zero rational integer has absolute value at least $1$ and which also follows from the product formula), one deduces that for any algebraic number field $K$ of degree $d$, any element $\gamma$ in $K$ and any rational number $p/q$ with $q\gamma\not=p$, we have 
$$ 
 | q \gamma- p | 
\geq 
\frac{1}{ (|p|+q)^{d-1} e^{d\rmh(\gamma)}}\cdotp
$$ 
We prefer to restrict the situation in Proposition $\ref{Proposition:VarianteLiouville}$   to the special case where $\gamma= \varepsilon\alpha$ for the sake of comparison with Theorem $\ref{thm:principal}$  andTheorem $\ref{thm:Effectif}$.

\begin{proof} 
We assume that the rational number $p/q\in\Q$ and the unit $\varepsilon$ of $\Q(\alpha)$ satisfy $\varepsilon\alpha\not= p/q$ and we aim to prove
 (a third formulation for) the conclusion of Proposition $\ref{Proposition:VarianteLiouville}$, namely
$$ 
\left | \varepsilon\alpha - \frac{p}{q} \right | 
\geq \frac{\kappa_1}{ q^d {\, \house{\varepsilon}\,}^{d-1}}\cdotp
$$
As we have just seen, this will complete the proof of Proposition $\ref{Proposition:VarianteLiouville}$.

We consider the embeddings $\sigma$ of $\Q(\alpha)$ into $\C$; since $\alpha\in\C$, one of them is the inclusion of $\Q(\alpha)$ in $\C$, that we denote $\Id$. Letting 
$$
f_\varepsilon(X)=a_0\prod_{\sigma}(X-\sigma(\varepsilon\alpha))
\quad
\hbox{and}
\quad
F_\varepsilon(X,Y)=Y^df_\varepsilon(X/Y),
$$
we have 
\begin{equation}\label{Equation:Fpq}
 |F_\varepsilon(p,q)|\;=\; a_0q^d \left|\varepsilon\alpha-\frac{p}{q}\right|\prod_{\sigma\neq \Id} \left|\sigma(\varepsilon\alpha)-\frac{p}{q}\right|.
\end{equation}
Since $q\ge 1$, $\kappa_1<1$ and $\house{\varepsilon}\, \ge 1$, the inequality that we want to establish is trivial if $|\varepsilon \alpha-p/q|\ge 1$. Therefore we can assume $|\varepsilon \alpha-p/q|< 1$,
in which case we have, for every $\sigma$, 
$$
 \left|\sigma(\varepsilon\alpha)-\frac{p}{q}\right|\;\leq\; |\sigma(\varepsilon\alpha) - \varepsilon\alpha |+1\le 2\, \house{\varepsilon\alpha}\, +1 \;\leq\;
(2\, \house{\alpha}\, +1 )\, \house{\varepsilon}.
$$ 
By assumption $\varepsilon\alpha\not= p/q$; hence, for each $\sigma$, the Galois conjugate $\sigma(\varepsilon\alpha)$ of $\varepsilon\alpha$ is distinct from $\sigma(p/q)= p/q$, and therefore
%We first assume that none of the Galois conjuguates $\sigma(\varepsilon\alpha)$ of $\varepsilon\alpha$ is equal to $p/q$; then 
 $F_\varepsilon(p,q)\not=0$. Since $F_\varepsilon(p,q)\in\Z$, we have $ |F_\varepsilon(p,q)|\ge 1$, and the conclusion follows. 
\\
 \end{proof}
 
Proposition $\ref{Proposition:VarianteLiouville}$ is trivial when $d=1$. 
In Section $\ref{S:ResultatPrincipal}$, we will show that this result is not optimal when $d\geq 3$, in the sense that we can replace $\kappa_1$ by an arbitrarily large constant, provided that $q$ be sufficiently large. Consider the case $d=2$. Let $\alpha$ 
be a quadratic number. The conclusion of Proposition $\ref{Proposition:VarianteLiouville}$ is still trivial if $\alpha$ is not real. So we suppose $\alpha \in \R$ and we denote by $\epsilon_0$ the fundamental unit $>1$ of the real quadratic field $\Q(\alpha)$. We plan to investigate how close to a best possible one is the lower bound exhibited in Proposition \ref{Proposition:VarianteLiouville}. For ease of notation we assume $\varepsilon>0$ without loss of generality. There are two cases: if $\varepsilon >1$, then we write $\varepsilon=\epsilon_0^n$ with $n>0$, while if 
$\varepsilon <1$, then we write $\varepsilon=\epsilon_0^{-n}$ with again $n>0$. From Proposition \ref{Proposition:VarianteLiouville} we infer, for all $p/q\in\Q$ and all $n>0$, 
$$
\left | \epsilon_0^n\alpha - \frac{p}{q}\right | 
\ge \frac{\kappa_1}{q^2 \epsilon_0^n } 
\quad\hbox{and}\quad 
\left | \epsilon_0^{-n}\alpha - \frac{p}{q}\right | 
\ge \frac{\kappa_1}{q^2 \epsilon_0^n }\cdotp
$$
 The next result shows that, infinitely often, these estimates cannot be improved: the dependence on $q$ is sharp in the quadratic case.

 \begin{lem}\label{lem:quadratic}
 Let $\epsilon_0$ be the fundamental unit $>1$ of the real quadratic field $\Q (\alpha )$.
 For any $n\ge 0$ with at most one exception, there exists a constant $\kappa_2$ and infinitely many rational numbers $p/q$ 
such that 
$$
\left | \epsilon_0^n\alpha - \frac{p}{q}\right | 
\le \frac{\kappa_2}{q^2 \epsilon_0^n } 
$$
 and infinitely many rational numbers $p/q$ 
such that 
$$
\left | \epsilon_0^{-n} \alpha - \frac{p}{q}\right | 
\le \frac{\kappa_2}{q^2 \epsilon_0^n } \cdotp
$$
An admissible value for the constant 
is $\kappa_2=\epsilon_0^n/\sqrt{5}$.
\end{lem}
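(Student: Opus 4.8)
The plan is to recognize both displayed inequalities as nothing more than Hurwitz's theorem. Recall that every irrational real number $\xi$ admits infinitely many rationals $p/q$ (with $q\ge 1$) satisfying $|\xi-p/q|<1/(\sqrt5\,q^2)$. With the choice $\kappa_2=\epsilon_0^n/\sqrt5$, the bound $\kappa_2/(q^2\epsilon_0^n)$ occurring on the right of both inequalities is exactly $1/(\sqrt5\,q^2)$. So I would apply Hurwitz's theorem once to $\xi=\epsilon_0^n\alpha$, which produces the first family, and once to $\xi=\epsilon_0^{-n}\alpha$, which produces the second; the only hypothesis needed is that these two numbers be irrational. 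Since $\alpha$ is quadratic and $\epsilon_0^{\pm n}\in\Q(\alpha)$, each of $\epsilon_0^{\pm n}\alpha$ lies in the quadratic field $\Q(\alpha)$ and is therefore irrational unless it happens to be rational.

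This reduces the whole statement to controlling the set of integers $n\ge 0$ for which $\epsilon_0^{n}\alpha$ or $\epsilon_0^{-n}\alpha$ is rational, as these are precisely the values that must be excluded. Let $\sigma$ denote the nontrivial automorphism of $\Q(\alpha)$ and set $\eta=\epsilon_0\sigma(\epsilon_0)\in\{-1,+1\}$, the norm of the unit $\epsilon_0$, so that $\sigma(\epsilon_0)=\eta\,\epsilon_0^{-1}$. An element of $\Q(\alpha)$ is rational if and only if it is fixed by $\sigma$, so applying $\sigma$ I would rewrite the condition $\epsilon_0^n\alpha\in\Q$ as $\epsilon_0^{2n}=\eta^n\sigma(\alpha)/\alpha$, and the condition $\epsilon_0^{-n}\alpha\in\Q$ as $\epsilon_0^{2n}=\eta^n\alpha/\sigma(\alpha)$.

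The heart of the matter, and the step I expect to require the most care, is counting these exceptions; it rests entirely on $\epsilon_0>1$. If $\epsilon_0^{n}\alpha\in\Q$ held for two distinct values $n<m$, dividing the relations $\epsilon_0^{2n}=\eta^n\sigma(\alpha)/\alpha$ and $\epsilon_0^{2m}=\eta^m\sigma(\alpha)/\alpha$ would give $\epsilon_0^{2(m-n)}=\eta^{m-n}$, which is impossible since the left-hand side exceeds $1$ while the right-hand side is $\pm1$. Hence at most one $n\ge 0$ makes $\epsilon_0^n\alpha$ rational, and the identical argument gives at most one $n\ge 0$ making $\epsilon_0^{-n}\alpha$ rational. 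Finally, should $\epsilon_0^{n_1}\alpha$ and $\epsilon_0^{-n_2}\alpha$ both be rational for some $n_1,n_2\ge 0$, multiplying their relations yields $\epsilon_0^{2(n_1+n_2)}=\eta^{\,n_1+n_2}=\pm1$, which forces $n_1+n_2=0$ and therefore $n_1=n_2=0$. Thus the excluded values of $n$, if there are any, collapse to the single value $n=0$, so in every case at most one value of $n$ is lost. For every remaining $n$ both $\epsilon_0^{n}\alpha$ and $\epsilon_0^{-n}\alpha$ are irrational, and Hurwitz's theorem applied to each delivers the two claimed families of approximations, completing the proof.
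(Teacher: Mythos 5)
Your proof is correct and takes essentially the same route as the paper, whose entire proof consists of invoking Hurwitz's theorem and remarking that the only possible exceptions are those $n$ for which $\epsilon_0^{n}\alpha$ or $\epsilon_0^{-n}\alpha$ is rational. Your Galois computation with $\sigma(\epsilon_0)=\eta\,\epsilon_0^{-1}$, showing via $\epsilon_0>1$ that the exceptional values of $n$ collapse to at most one, carefully fills in the exception count that the paper leaves implicit.
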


However this lemma does not give a satisfactory answer to the question of optimality, because $n$ is fixed and 
$\kappa_2$ 
depends on $n$. In \S$\ref{S:Effectif}$ we show that the dependence on $\house{\varepsilon}$ is not sharp in Liouville's estimate, even in the quadratic case.

\begin{proof}[of Lemma \ref{lem:quadratic}]
The possible exception is $n$ such that $\epsilon_0^n \alpha$ or $\epsilon_0^{-n} \alpha$ is rational, and the result follows 
from a theorem of Hurwitz (see for instance \cite{LN785}, Chap.~1, Th. 2F).
\end{proof}

%-:-:-:-:-:-:-:-:-:-:-:-:-:-:-:-:-:-:-:-:-:-:-:-:-:-:-:-:-:-:-:-:-:-:-:-:-:-:-:-:-:-:-:-:-:-:-:-:-:-:-:-:-:-:-:-:-:-:-:-:-:-:-:-:-:-:
\section{A refinement of Liouville's estimate}\label{S:ResultatPrincipal}

Let $\alpha$ be an algebraic number of degree $d$ over $\Q$ and let $K$ be the field $\Q(\alpha)$.
In this section, we plan to prove the following result.

\begin{thm} 
 \label{thm:principal}
 For any constant $\kappa>0$, the set of pairs $(p/q,\varepsilon)\in\Q\times\ZK^\times$ such that 
$[\Q(\varepsilon\alpha):\Q]\ge 3$ and 
\begin{equation}\label{Equation:TheoremePrincipal}
\left | \varepsilon\alpha - \frac{p}{q}\right | \le \frac{\kappa}{q^d \; \house{\varepsilon}^{\, d-1}}
\end{equation}
is finite.
\end{thm}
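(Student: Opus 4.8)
The plan is to deduce Theorem~\ref{thm:principal} from the main result of the previous paper~\cite{LW} on families of Thue--Mahler equations, which in turn rests on Schmidt's Subspace Theorem. The inequality~(\ref{Equation:TheoremePrincipal}) should be reinterpreted, exactly as in the proof of Proposition~\ref{Proposition:VarianteLiouville}, in terms of the binary form associated with the minimal polynomial of $\varepsilon\alpha$. Setting $F_\varepsilon(X,Y)=Y^d f_\varepsilon(X/Y)$ with $f_\varepsilon(X)=a_0\prod_\sigma(X-\sigma(\varepsilon\alpha))$, identity~(\ref{Equation:Fpq}) gives
$$
|F_\varepsilon(p,q)|=a_0 q^d\left|\varepsilon\alpha-\frac{p}{q}\right|\prod_{\sigma\neq\Id}\left|\sigma(\varepsilon\alpha)-\frac{p}{q}\right|.
$$
First I would bound the factors with $\sigma\neq\Id$ from above by $(2\,\house{\alpha}\,+1)\,\house{\varepsilon}$, just as before (the hypothesis $|\varepsilon\alpha-p/q|<1$ being harmless, since large differences make~(\ref{Equation:TheoremePrincipal}) vacuous). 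Combining this with~(\ref{Equation:TheoremePrincipal}) should yield an upper bound of the shape $|F_\varepsilon(p,q)|\le C\kappa$ for an absolute constant $C$ depending only on $\alpha$.

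The key observation is then that $F_\varepsilon$ is, up to the scaling by powers of $\varepsilon$ and its conjugates, a member of a \emph{twisted family} of binary forms parametrized by the unit $\varepsilon\in\ZK^\times$. Indeed, writing $F$ for the form attached to $\alpha$ itself, the form $F_\varepsilon$ is obtained from $F$ by the multiplicative action of the units $\sigma(\varepsilon)$ on the roots; this is precisely the twisting by units considered in~\cite{LW}. Thus a solution $(p,q)$ of $|F_\varepsilon(p,q)|\le C\kappa$ with $[\Q(\varepsilon\alpha):\Q]\ge 3$ furnishes a solution of the associated family of Thue--Mahler inequalities, and the hypothesis of degree at least $3$ guarantees we are in the range where the non--degeneracy conditions of~\cite{LW} apply (in degrees $1$ and $2$ the Subspace Theorem yields nothing, which is why the quadratic case was treated separately in Lemma~\ref{lem:quadratic}). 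The finiteness statement of~\cite{LW} then bounds the number of admissible triples, hence the number of pairs $(p/q,\varepsilon)$.

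The main obstacle I anticipate is bookkeeping the correspondence between the pair $(p/q,\varepsilon)$ and the data entering the result of~\cite{LW}: one must check that distinct pairs $(p/q,\varepsilon)$ give rise to genuinely distinct solutions of the family of equations (so that finiteness of the latter forces finiteness of the former), and conversely that the bound $|F_\varepsilon(p,q)|\le C\kappa$ really matches the integrality and size conditions under which~\cite{LW} is stated. In particular one should verify that the height of the relevant integer point grows with $q$ and $\house{\varepsilon}$ in the way demanded, so that only finitely many $(p,q,\varepsilon)$ can satisfy the inequality. A secondary subtlety is the degree hypothesis $[\Q(\varepsilon\alpha):\Q]\ge 3$: since $\varepsilon\alpha$ need not generate the same field as $\alpha$, I would pause to confirm that $\Q(\varepsilon\alpha)$ has degree $d$ for all but finitely many $\varepsilon$, or else stratify the argument according to the subfield generated by $\varepsilon\alpha$, so that the family considered in~\cite{LW} is the correct one in each stratum. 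Once these identifications are in place, the finiteness is immediate from~\cite{LW}.
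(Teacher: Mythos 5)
Your proposal is correct and is essentially the paper's own proof: after noting that one may assume $q^d\ge\kappa$ (so that $|\varepsilon\alpha-p/q|\le 1$), the paper bounds each conjugate factor in $(\ref{Equation:Fpq})$ by $(2\,\house{\alpha}+1)\,\house{\varepsilon}$, deduces $|F_\varepsilon(p,q)|\le a_0\kappa\,(2\,\house{\alpha}+1)^{d-1}$ from $(\ref{Equation:TheoremePrincipal})$, and concludes by Theorem $\ref{thm:LW}$ quoted from \cite{LW}, exactly as you describe. The bookkeeping you anticipate is already absorbed into the statement of Theorem $\ref{thm:LW}$ (finiteness of triples $(x,y,\varepsilon)\in\Z^2\times\ZK^\times$ with $xy\neq0$ and $F_\varepsilon(x,y)=k\neq 0$ under $[\Q(\varepsilon\alpha):\Q]\ge3$): the value $k=F_\varepsilon(p,q)$ is automatically nonzero because the conjugates of $\varepsilon\alpha$ are irrational, no stratification by subfields is needed since the degree hypothesis appears verbatim in that theorem, and the one residual point you rightly flag --- matching the condition $xy\neq0$, i.e.\ the case $p=0$ --- is passed over in silence by the paper's proof as well.
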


 Theorem $\ref{thm:principal}$ is trivial when $d=1$ and when $d=2$. For the proof we can suppose $d\ge 3$. 
For $\kappa<\kappa_1$, the inequality   $(\ref{Equation:TheoremePrincipal})$ is a consequence of   Proposition $\ref{Proposition:VarianteLiouville}$, and the set of exceptional $(p/q,\varepsilon)$ has at most one element. The point    is that  Theorem   $\ref{thm:principal}$  holds true for any arbitrarily large constant $\kappa$. The conclusion can also be stated  the following way
$$
  (q\; \house{\varepsilon})^{d-1} \Vert q \varepsilon\Vert  \longrightarrow +\infty
\quad\hbox{as}\quad \max\{q,\house{\varepsilon}\}   \longrightarrow +\infty.
$$

%We will denote the irreducible polynomial in $\Z[X]$ of $\alpha$ by 
%$$
%f(X)\;=\;a_0X^d+a_1X^{d-1}+\cdots + a_{d-1}X+a_d\;=\;a_0\prod_{\sigma}(X-\sigma(\alpha))
% \quad\hbox{with } a_0>0,
%$$
%where $\sigma$ in the product runs through the set of embeddings of the field $ K:=\Q(\alpha)$ in $\C$. 
We twist the minimal polynomial $(\ref{Equation:PolynomeMinimal}$) of $\alpha$ by a unit $\varepsilon$ of $K$ by writing
$$
f_\varepsilon(X)=a_0\prod_{\sigma}(X-\sigma(\varepsilon\alpha))
\quad
\hbox{and}
\quad
F_\varepsilon(X,Y)=Y^df_\varepsilon(X/Y).
$$
%In the course of the proof, we will use the following theorem whose proof can be found in \cite{LW}.
 Theorem $\ref{thm:principal}$ is a corollary of the following theorem whose proof can be found in \cite{LW}.

\begin{thm} 
 \label{thm:LW}
 For any integer $k\not=0$, the set of triples $(x,y,\varepsilon)\in\Z^2\times\ZK^\times$ with $xy\not=0$ satisfying 
$$
[\Q(\varepsilon\alpha):\Q]\ge 3 
 \quad\hbox{and}\quad F_\varepsilon (x,y)=k
$$
is finite.
\end{thm}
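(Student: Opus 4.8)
The plan is to assume that the set of triples is infinite and derive a contradiction by reducing the equation $F_\varepsilon(x,y)=k$ to a vanishing sum of $S$-units in the fixed field $K$, to which Schmidt's Subspace Theorem applies in the guise of the theorem of Evertse--Schlickewei--Schmidt. First I would pass from $(x,y)$ to the algebraic number $\beta=x-\varepsilon\alpha y\in K$. Since $a_0\alpha\in\ZK$ and $\varepsilon\in\ZK^\times$, the element $a_0\beta=a_0x-(a_0\varepsilon\alpha)y$ lies in $\ZK$, and from $F_\varepsilon(x,y)=a_0\,\rmN_{K/\Q}(\beta)=k$ one gets $\rmN_{K/\Q}(a_0\beta)=a_0^{\,d-1}k$, a fixed nonzero rational integer. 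Hence the ideal $(a_0\beta)$ has bounded norm and is divisible only by the primes of the fixed finite Galois--stable set $S$ made of the archimedean places of $K$ together with the finite places dividing $a_0^{\,d-1}k$; that is, $a_0\beta$ is an $S$-unit. As $\varepsilon$ is a unit, each conjugate $\sigma_i(\varepsilon)$ and each conjugate $\sigma_i(a_0\beta)$ is an $S$-unit as well.

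Next I would exploit a Siegel identity. Writing $z_i=\sigma_i(\beta)=x-\gamma_i y$ with $\gamma_i=\sigma_i(\varepsilon\alpha)=\sigma_i(\varepsilon)\sigma_i(\alpha)$, and choosing three embeddings with $\gamma_1,\gamma_2,\gamma_3$ pairwise distinct (possible precisely because $[\Q(\varepsilon\alpha):\Q]\ge 3$), one has
$$(\gamma_2-\gamma_3)z_1+(\gamma_3-\gamma_1)z_2+(\gamma_1-\gamma_2)z_3=0.$$
Expanding each coefficient $\gamma_i-\gamma_j=\sigma_i(\varepsilon)\sigma_i(\alpha)-\sigma_j(\varepsilon)\sigma_j(\alpha)$ and using $a_0z_i=\sigma_i(a_0\beta)$ converts this into a relation $\sum_{l=1}^{6}c_l w_l=0$, in which the coefficients $c_l$ are fixed elements of $K$ built from the conjugates of $\alpha$, while each $w_l=\sigma_i(\varepsilon)\sigma_j(a_0\beta)$ is a product of $S$-units, hence itself an $S$-unit. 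This is a vanishing sum of six $S$-units with fixed coefficients.

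I would then apply the $S$-unit equation theorem: apart from finitely many non-degenerate solutions, every solution has a proper vanishing subsum, and an induction on the number of terms shows that in all cases the projective point $(w_1:\cdots:w_6)$ lies in a fixed finite set. The gain is that the ratios in which the $\beta$-part cancels, for instance $w_1/w_2=\sigma_2(\varepsilon)/\sigma_3(\varepsilon)$, take only finitely many values; combined with the norm relation $\prod_i\sigma_i(\varepsilon)=\pm1$ this confines $\varepsilon$ to a finite set. For each of these finitely many units, the degree hypothesis ensures that $F_\varepsilon$ is a binary form with at least three distinct linear factors, so $F_\varepsilon(x,y)=k$ is a genuine Thue equation with only finitely many solutions $(x,y)$, and the theorem follows.

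The step I expect to be the main obstacle is the analysis of the degenerate solutions of the six--term equation: a vanishing subsum such as $c_1w_1+c_4w_4=0$ mixes the conjugates of $\varepsilon$ with those of $\beta$ and does not by itself bound $\varepsilon$, so the subsum combinatorics must be organized carefully, and the hypothesis $[\Q(\varepsilon\alpha):\Q]\ge 3$ must be invoked exactly where it rules out the degenerate configurations that could otherwise sustain an infinite family. A secondary point, consistent with the remarks in the introduction, is that the Subspace Theorem is ineffective, so this argument yields finiteness without any bound on the solutions.
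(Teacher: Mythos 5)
First, a point of reference: the paper itself contains no proof of Theorem \ref{thm:LW} --- it is quoted verbatim with the remark that the proof ``can be found in \cite{LW}''. So your attempt must be judged against the method of that reference, and your opening is indeed the right skeleton and the same in spirit: pass to $\beta=x-\varepsilon\alpha y$, observe $F_\varepsilon(x,y)=a_0\,\rmN_{K/\Q}(\beta)$ so that $\rmN_{K/\Q}(a_0\beta)=a_0^{d-1}k$ is fixed and $a_0\beta$ is an $S$-unit for a fixed finite set $S$ (working in the Galois closure so that conjugates remain $S$-units), exploit a Siegel identity on three pairwise distinct conjugates $\gamma_1,\gamma_2,\gamma_3$ of $\varepsilon\alpha$ (available exactly because $[\Q(\varepsilon\alpha):\Q]\ge 3$), split each coefficient $\gamma_i-\gamma_j=\sigma_i(\varepsilon)\sigma_i(\alpha)-\sigma_j(\varepsilon)\sigma_j(\alpha)$ to obtain a six-term $S$-unit equation with coefficients $\pm\sigma_i(\alpha)$ (fixed after pigeonholing over the finitely many possible triples of embeddings, since the triple varies with $\varepsilon$), and invoke the subspace machinery. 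All of these reductions are sound.

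The proof nevertheless has a genuine gap, in two places, and it sits exactly where you predicted. First, your assertion that ``an induction on the number of terms shows that in all cases the projective point $(w_1:\cdots:w_6)$ lies in a fixed finite set'' is false as a statement about $S$-unit equations: the Evertse--Schlickewei--Schmidt theorem gives finitely many projective points only for \emph{nondegenerate} solutions, while a degenerate solution splits into minimal vanishing subsums whose internal ratios are finitely valued but whose \emph{relative} scalings are unconstrained --- from $c_1w_1+c_2w_2=0$ and $c_3w_3+\cdots+c_6w_6=0$ the point $(w_1:\cdots:w_6)$ runs through infinitely many values as the two blocks are scaled by independent $S$-units. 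Worse, the dangerous subsums are precisely the mixed ones you identified, e.g.\ $\sigma_2(\alpha)\sigma_2(\varepsilon)\sigma_1(a_0\beta)=\sigma_1(\alpha)\sigma_1(\varepsilon)\sigma_2(a_0\beta)$, which fix no ratio of conjugates of $\varepsilon$ alone; ruling these out (using $[\Q(\varepsilon\alpha):\Q]\ge 3$, several Siegel identities played against one another, and the recovery of $x,y$ from pairs of conjugates via $\sigma_i(\beta)-\sigma_j(\beta)=(\gamma_j-\gamma_i)y$) is the actual content of the proof in \cite{LW}, and naming it ``the main obstacle'' does not discharge it. Second, even granting finiteness of the projective point, your extraction step is incomplete when $d\ge 4$: finitely many values of the ratios $\sigma_i(\varepsilon)/\sigma_j(\varepsilon)$ for $i,j\in\{1,2,3\}$ together with $\rmN_{K/\Q}(\varepsilon)=\pm1$ impose only two conditions on the rank-$r$ unit lattice, so for $r\ge 3$ infinitely many units satisfy them; one must control $\sigma_i(\varepsilon)/\sigma_j(\varepsilon)$ for \emph{all} pairs of embeddings, which requires running the argument over enough triples and noting that $\sigma_i(\varepsilon\alpha)=\sigma_j(\varepsilon\alpha)$ already forces $\sigma_i(\varepsilon)/\sigma_j(\varepsilon)=\sigma_j(\alpha)/\sigma_i(\alpha)$. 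Your concluding step --- for each of the finitely many remaining $\varepsilon$, the form $F_\varepsilon$ has at least three distinct linear factors and so $F_\varepsilon(x,y)=k$ has finitely many solutions --- is fine, as is your closing remark about ineffectiveness. In short: correct strategy and correct reductions, but the two decisive finiteness deductions are, respectively, false as stated and insufficient, so what you have is a program rather than a proof.
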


%ΩÔøΩÔøΩÔøΩÔøΩÔøΩÔøΩÔøΩÔøΩÔøΩÔøΩÔøΩÔøΩÔøΩÔøΩÔøΩÔøΩÔøΩÔøΩÔøΩÔøΩÔøΩÔøΩÔøΩÔøΩ

\begin{proof} [of Theorem $\ref{thm:principal}$]
Let $\kappa>0$ and let $(p/q,\varepsilon)\in\Q\times \ZK^\times$ verify $(\ref{Equation:TheoremePrincipal})$. We have 
 $ \house{\varepsilon}\geq 1$. There is no restriction in supposing $q^d\ge \kappa$. 
Consider the relation $(\ref{Equation:Fpq})$. 
For $\sigma\neq \Id$, we use the upper bound 
$$
 \left|\sigma(\varepsilon\alpha)-\frac{p}{q}\right|\;\leq\; |\varepsilon\alpha-\sigma(\varepsilon\alpha)|+ \left| \varepsilon\alpha-\frac{p}{q}\right| 
\; \leq \; |\varepsilon\alpha-\sigma(\varepsilon\alpha)| +1,
 $$
which is coming from a weak form of $(\ref{Equation:TheoremePrincipal})$. Since
$$
 |\varepsilon\alpha-\sigma(\varepsilon\alpha)| +1 \; \leq\; 2\; \house{\varepsilon\alpha}+1\;\leq\; (2\; \house{\alpha}+1 )\; 
\house{\varepsilon}\, ,
$$
we deduce, by taking into account $(\ref{Equation:TheoremePrincipal})$,
$$
|F_\varepsilon(p,q)|\;\leq\; a_0 q^d \left| \varepsilon\alpha-\frac{p}{q}\right| (2\; \house{\alpha}+1 )^{d-1}\; { \house{\varepsilon}\; }^{d-1}\; \leq\; 
a_0 \kappa (2\; \house{\alpha}+1 )^{d-1}.
$$
Theorem $\ref{thm:LW}$ allow us to conclude that the set of rational numbers $p/q$ which satisfy  $(\ref{Equation:TheoremePrincipal})$  is finite. 
 \end{proof}

To illustrate Theorem $\ref{thm:principal}$, let us make explicit the case of a cubic field whose unit group is of rank $1$.

\begin{cor}
 \label{corollaire:cubique}
Let $\alpha$ be a real cubic number which has two Galois imaginary conjuguates. Let $\epsilon_0$ be the real fundamental unit 
 $> 1$ of the cubic field $\Q(\alpha)$. 
For any constant $\kappa>0$, the set of pairs $(p/q,n)\in\Q\times\Z$ with $n>0$, such that 
$$
\left| \alpha-\frac{p}{q\epsilon_0^n}\right| \le \frac{\kappa}{q^3\epsilon_0^{3n}}
$$
is finite, and the set of pairs $(p/q,n)\in\Q\times\Z$ with $n>0$, such that 
$$
\left| \alpha-\frac{p\epsilon_0^n}{q}\right| \le \frac{ \kappa }{q^3}
$$
is finite.
\end{cor}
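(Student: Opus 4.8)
The plan is to read both assertions off from Theorem~\ref{thm:principal} (with $d=3$), after choosing $\varepsilon$ to be the appropriate power of $\epsilon_0$ and computing its house. First I would record the arithmetic of $K=\Q(\alpha)$: as $\alpha$ is real with two imaginary conjugates, $K$ has signature $(1,1)$, so its only embeddings are the inclusion $\Id$ together with a complex conjugate pair $\sigma,\overline{\sigma}$, and by Dirichlet the unit group is $\{\pm1\}\times\langle\epsilon_0\rangle$. The norm $\rmN(\epsilon_0)=\epsilon_0\,|\sigma(\epsilon_0)|^2$ is positive, hence equal to $+1$; therefore $|\sigma(\epsilon_0)|=\epsilon_0^{-1/2}$. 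Comparing the real value $\epsilon_0^{\pm n}$ with the common modulus $\epsilon_0^{\mp n/2}$ of the two complex values then gives, for every $n>0$,
$$
\house{\epsilon_0^{n}}=\epsilon_0^{n}
\quad\hbox{and}\quad
\house{\epsilon_0^{-n}}=\epsilon_0^{n/2}.
$$

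With these two values at hand, each inequality of the corollary becomes a transcription of~(\ref{Equation:TheoremePrincipal}). For the first one I would set $\varepsilon=\epsilon_0^{n}$ and multiply $|\alpha-p/(q\epsilon_0^{n})|\le\kappa/(q^{3}\epsilon_0^{3n})$ by $\epsilon_0^{n}$; since $\house{\varepsilon}^{2}=\epsilon_0^{2n}$, this reads exactly $|\varepsilon\alpha-p/q|\le\kappa/(q^{3}\house{\varepsilon}^{2})$. For the second one I would instead set $\varepsilon=\epsilon_0^{-n}$ and multiply $|\alpha-p\epsilon_0^{n}/q|\le\kappa/q^{3}$ by $\epsilon_0^{-n}$; since now $\house{\varepsilon}^{2}=\epsilon_0^{n}$, this again reads $|\varepsilon\alpha-p/q|\le\kappa/(q^{3}\house{\varepsilon}^{2})$. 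In both cases the correspondence $(p/q,n)\mapsto(p/q,\varepsilon)$ is injective on $n>0$, so the finiteness supplied by Theorem~\ref{thm:principal} transfers to the sets in the corollary.

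The only hypothesis of Theorem~\ref{thm:principal} that is not automatic is $[\Q(\varepsilon\alpha):\Q]\ge3$. As $\varepsilon\alpha\in K$ with $[K:\Q]=3$, it can fail only when $\varepsilon\alpha\in\Q$; and if $\epsilon_0^{m_1}\alpha$ and $\epsilon_0^{m_2}\alpha$ were both rational with $m_1\ne m_2$, then $\epsilon_0^{m_1-m_2}$ would be a rational unit, hence $\pm1$, which is excluded by $\epsilon_0>1$. Thus at most one exponent is exceptional. For that single exponent (should it occur), I would discard its contribution by an elementary Liouville bound: writing $\varepsilon\alpha=a/b$ in lowest terms, every $p/q\ne a/b$ obeys $|\varepsilon\alpha-p/q|\ge1/(bq)$, which together with the hypothesis confines $q$, and then $p$, to a finite set. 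Hence the exceptional exponent yields only finitely many pairs, and adjoining these to the pairs governed by Theorem~\ref{thm:principal} completes the argument.

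I expect the crux to be the house computation rather than the reductions: it is precisely the asymmetry $\house{\epsilon_0^{n}}=\epsilon_0^{n}$ against $\house{\epsilon_0^{-n}}=\epsilon_0^{n/2}$, forced by $\rmN(\epsilon_0)=1$ in signature $(1,1)$, that makes the two differently shaped inequalities collapse onto the single normalised form of~(\ref{Equation:TheoremePrincipal}). Everything else is bookkeeping and a one-line Liouville estimate.
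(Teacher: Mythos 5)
Your proposal is correct and follows exactly the paper's route: apply Theorem~\ref{thm:principal} with $d=3$, taking $\varepsilon=\epsilon_0^{n}$ (house $\epsilon_0^{n}$) for the first inequality and $\varepsilon=\epsilon_0^{-n}$ (house $\epsilon_0^{n/2}$) for the second, which is precisely the paper's one-line proof. You additionally supply details the paper leaves implicit --- the norm computation $\rmN(\epsilon_0)=+1$ forcing $|\sigma(\epsilon_0)|=\epsilon_0^{-1/2}$, the injectivity of $n\mapsto\varepsilon$, and the disposal of the at most one exponent with $\varepsilon\alpha\in\Q$ --- all of which are accurate.
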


\begin{proof}
In Theorem $\ref{thm:principal}$, take $\varepsilon=\epsilon_0^n$ with $\house{\varepsilon}=\epsilon_0^n$ for the first part of the statement of Corollary $\ref{corollaire:cubique}$, $\varepsilon=\epsilon_0^{-n}$ with $\house{\varepsilon}=\epsilon_0^{n/2}$ for the second one. 
\end{proof}

%-:-:-:-:-:-:-:-:-:-:-:-:-:-:-:-:-:-:-:-:-:-:-:-:-:-:-:-:-:-:-:-:-:-:-:-:-:-:-:-:-:-:-:-:-:-:-:-:-:-:-:-:-:-:-:-:-:-:-:-:-:-:-:-:-:-:
\section{Effective estimates}\label{S:Effectif}

 A sharp estimate from below for $|\varepsilon\alpha-p/q|$ in terms of $\house{\varepsilon}$ can be achieved in an effective way by means of a lower bound for linear combinations of logarithms of algebraic numbers. 
 
 \begin{thm}\label{thm:Effectif}
 Let $K$ be a number field and let $\alpha\in K$. There exists an effectively computable constant $\kappa_3>0$ such that, for any unit $\varepsilon\in\ZK^\times$ and any rational number $p/q$ with $\varepsilon\alpha\not=p/q$, 
\begin{equation}\label{Equation:ThmEffectif}
\left| \varepsilon\alpha-\frac{p}{q}\right| \ge 
\bigl(\log(\house{\varepsilon}+2)
\bigr)^{-\kappa_3 \log \max\{|p|,\; q,\; 2\}}.
\end{equation}
 \end{thm}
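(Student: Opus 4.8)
The plan is to bound $|\varepsilon\alpha - p/q|$ from below by expressing it as a linear form in logarithms of algebraic numbers and invoking Baker's theory. Writing $\varepsilon\alpha - p/q = 0$ is excluded by hypothesis, so the quantity is a nonzero algebraic number lying in $K$, and its smallness must be controlled. First I would observe that if $|\varepsilon\alpha - p/q|$ is not small, say $\geq 1$, then since the right-hand side of $(\ref{Equation:ThmEffectif})$ is at most $1$ (because $\log(\house{\varepsilon}+2)\geq \log 2>0$ and the exponent is negative), the inequality is trivially satisfied. So I may assume $|\varepsilon\alpha - p/q|$ is small, and the goal becomes an upper bound on how small it can be, phrased multiplicatively.

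The key step is to recognize that $\varepsilon$ decomposes in the unit group as a product of powers of a fixed system of fundamental units $\eta_1,\dots,\eta_r$ (where $r$ is the rank of $\ZK^\times$) times a root of unity, so that $\log|\sigma(\varepsilon)|$ for each embedding $\sigma$ is an integer linear combination $\sum_j b_j \log|\sigma(\eta_j)|$. The exponents $b_j$ are bounded linearly in terms of $\log\house{\varepsilon}$ by the standard comparison between the naive height of a unit and its coordinates in the lattice of logarithms (the regulator estimate). Then I would form the auxiliary quantity $\Lambda = \log|\varepsilon\alpha - p/q|$ and relate it, through the product formula or through a comparison of $|\varepsilon\alpha - p/q|$ with its conjugates, to a linear combination of logarithms of the $\sigma(\eta_j)$, of $\sigma(\alpha)$, and of $q$. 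Applying Baker's lower bound for linear forms in logarithms (the version with the dependence on the height of the coefficients being logarithmic) yields
$$
\log\left|\varepsilon\alpha - \frac{p}{q}\right| \geq -C\,(\log B)\,\log\log(\house{\varepsilon}+2),
$$
where $B$ bounds the coefficients of the linear form and $C$ is effective. Taking $B$ of the order of $\max\{|p|,q,2\}$ and rearranging the inequality into the stated exponential form gives $(\ref{Equation:ThmEffectif})$ for a suitable effectively computable $\kappa_3$.

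The main obstacle will be setting up the linear form correctly so that its coefficients are controlled by $\log\max\{|p|,q,2\}$ while the arguments of the logarithms have height controlled by $\house{\varepsilon}$, since the asymmetry between these two dependences is exactly what produces the unusual shape of the bound (a double logarithm of $\house{\varepsilon}$ raised to a power linear in $\log\max\{|p|,q,2\}$). In particular I expect to need to be careful about which quantities play the role of the "variable" base points and which play the role of the coefficients in the linear form, and to track how the archimedean valuations of $\varepsilon$ at the various embeddings enter. Once the form is correctly arranged, the rest is a routine application of a Baker-type estimate together with the regulator bound relating $\house{\varepsilon}$ to the integer exponents, so the conceptual difficulty is concentrated entirely in this reduction step.
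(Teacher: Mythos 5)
Your outline follows the same route as the paper's own proof (reduction to a small difference, decomposition of $\varepsilon$ over a system of fundamental units, the bound $\max_j|b_j|\le\kappa_8\log\house{\varepsilon}$ of Lemma \ref{lemma:EquivalenceNormes}, then a Baker-type estimate in the form of Proposition \ref{prop:MinorationCLL}), but the decisive step --- the one you yourself flag as ``the main obstacle'' and leave unresolved --- is set up backwards, and with your stated assignment the proof fails. You propose a linear form whose \emph{coefficients} are controlled by $\log\max\{|p|,q,2\}$ and whose \emph{arguments} have height controlled by $\house{\varepsilon}$. Feeding that into a bound of the shape $\exp\{-\kappa_4(\log B)(\log A_1)\cdots(\log A_m)\}$ gives roughly
$$
|\Lambda|\ge\exp\bigl\{-\kappa\,\bigl(\log\log\max\{|p|,\,q,\,2\}\bigr)\log\house{\varepsilon}\bigr\},
$$
a fixed negative power of $\house{\varepsilon}$ itself, exponentially weaker in $\varepsilon$ than $(\ref{Equation:ThmEffectif})$, which demands only a negative power of $\log\house{\varepsilon}$; structurally, any setup in which $\house{\varepsilon}$ enters through the heights of the arguments can never do better than a power of $\house{\varepsilon}^{-1}$, since Baker's bound is exponential in $\prod_j\log A_j$. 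The correct assignment is the exact transpose, and it is forced by the decomposition you already wrote down: the integer exponents $b_1,\ldots,b_r$ (together with the coefficient $1$ of one last logarithm) \emph{are} the coefficients of the linear form, so by Lemma \ref{lemma:EquivalenceNormes} one takes $B=\kappa_6\log(\house{\varepsilon}+1)$, whence $\log B\approx\log\log(\house{\varepsilon}+2)$; and $p,q$ enter only through the \emph{height of a single argument}: setting $\lambda_{r+1}=\lambda_0-b_1\log\epsilon_1-\cdots-b_r\log\epsilon_r$, where $\lambda_0$ is the principal logarithm of $\gamma:=\varepsilon\alpha q/p$, one has $e^{\lambda_{r+1}}=\zeta\alpha q/p$, whose height is $O(\log\max\{|p|,q,2\})$, so $A_{r+1}=\kappa_5\max\{|p|,q,2\}$ while $A_1,\ldots,A_r$ are constants because the $\epsilon_j$ and $\alpha$ are fixed. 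Proposition \ref{prop:MinorationCLL} then yields $|\lambda_0|\ge\exp\{-\kappa_7(\log B)\log\max\{|p|,q,2\}\}$, which rearranges to $(\ref{Equation:ThmEffectif})$. Your displayed intermediate inequality happens to be numerically the right one, but as you derive it it is inconsistent with your own labels: $B$ cannot both bound the coefficients and be of order $\max\{|p|,q,2\}$ when you have just argued, correctly, that the coefficients are $O(\log\house{\varepsilon})$.

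Two smaller points. First, your trivial case $|\varepsilon\alpha-p/q|\ge1$ is too weak to launch the analytic step: to pass from $|\varepsilon\alpha-p/q|$ to a small nonzero logarithm you need $0<|\gamma-1|<1/2$, so the case to dispose of trivially is $|\varepsilon\alpha-p/q|\ge|p|/(2q)$, as in the paper; only then does the principal logarithm satisfy $0<|\lambda_0|<2|\gamma-1|$ and transfer the smallness. Second, the auxiliary number carrying the $p,q$-dependence is $\zeta\alpha q/p$, so $p$ must appear alongside $q$ (your list ``of the $\sigma(\eta_j)$, of $\sigma(\alpha)$, and of $q$'' omits it), and no detour through the product formula or the conjugates is needed: the single archimedean estimate at the given embedding suffices.
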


 We  will easily deduce Theorem $\ref{thm:Effectif}$  as a  consequence of Proposition 9.21 of  \cite{GL326}, but we can also deduce it 
from Theorem 4.1 of  \cite{BEG} with an explicit value for $\kappa_4$. At the same time we notice that  Theorem $\ref{thm:Effectif}$ can be generalized to groups of $S$--units of a number field in place of $\ZK^\times$, which amounts to replace $\ZK^\times$ by any finitely generated subgroup of the multiplicative group of a fixed number field.

% We could deduce  Theorem $\ref{thm:Effectif}$  from Theorem 4.1 of \cite{BEG} with an explicit value for $\kappa_4$, but it will be as easy to obtain it as a consequence of Proposition 9.21 of \cite{GL326}.
 % We use the absolute logarithmic height $\rmh(\alpha)$, which is defined, for an algebraic number $\alpha$ of degree $d$, as follows. 

 \begin{prop}\label{prop:MinorationCLL}
Let $m$ and $D$ be positive integers. There exists an effectively computable 
positive number $\kappa_4$ depending only on $m$ and $D$ with the following property.
Let $\lambda_1,\ldots,\lambda_m$ be
logarithms of algebraic numbers; define $\alpha_j=\exp(\lambda_j)$ \
($1\le j\le m$). Assume that the number field
$\Q(\alpha_1,\ldots,\alpha_m)$ has degree $\le D$ over
$\Q$. Let $b_1,\ldots,b_m$ be rational integers, not all of which are zero. Further, let $B, A_1,\ldots,A_m$ be positive
real numbers. Assume
$$
\log A_j\ge\max\left\{\rmh(\alpha_j),\ |\lambda_j|,\ 1\right\}
\quad (1\le j\le m),
$$ 
and 
$$
B\ge\max \{|b_1|,\ldots, |b_m|, e\} .
$$ 
Assume further that the number
$$
\Lambda:= b_1\lambda_1+\cdots+b_m\lambda_m
$$
is nonzero. Then
$$
|\Lambda|>
\exp\{- 
\kappa_4(\log B)(\log A_1)\cdots(\log A_m) \}. 
$$
 \end{prop}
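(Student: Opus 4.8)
The statement is exactly a quantitative lower bound for a nonzero linear form in logarithms of algebraic numbers, that is, an effective estimate of Baker's type. The plan is therefore not to build the transcendence machinery from scratch, but to deduce the inequality from the principal estimate of the theory of linear forms in logarithms recorded in \cite{GL326}. The whole analytic content --- construction of an auxiliary function by Siegel's lemma, a zero estimate, and an extrapolation argument yielding a contradiction --- is packaged in that reference, and the constant it produces already depends only on the number $m$ of logarithms and on an upper bound $D$ for the degree of the field generated by the $\alpha_j$.

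Concretely, I would invoke the main theorem of \cite{GL326} in the shape
$$
|\Lambda| > \exp\{-C\,(\log B)(\log A_1)\cdots(\log A_m)\},
$$
valid for a nonzero $\Lambda=b_1\lambda_1+\cdots+b_m\lambda_m$, with a constant $C$ depending only on $m$ and $D$. The only genuine task is to verify that our normalizations are at least as strong as those demanded there. For the heights this is immediate: the hypothesis $\log A_j\ge\max\{\rmh(\alpha_j),\,|\lambda_j|,\,1\}$ dominates the usual ``modified height'' conditions (which typically read $\log A_j\ge\max\{\rmh(\alpha_j),\,|\lambda_j|/D,\,1/D\}$), so each $A_j$ is admissible. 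For the coefficient size, the condition $B\ge\max\{|b_1|,\ldots,|b_m|,e\}$ ensures $\log B\ge 1$ and that any factor of the form $\log(eB)$ occurring in the reference is at most $2\log B$; discrepancies of this kind are absorbed into the final constant.

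It then remains only to set $\kappa_4$ equal to $C$ multiplied by the finitely many absolute factors introduced by these adjustments. Since each such factor depends at most on $m$ and $D$, so does $\kappa_4$, and its effective computability is inherited from that of $C$ in \cite{GL326}. This yields the asserted inequality.

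The main obstacle is, in a precise sense, not present in this reduction: the hard mathematics lies entirely in the cited theorem, whose proof rests on the full apparatus of the theory of linear forms in logarithms. Within the present argument the only point requiring care is bookkeeping --- reconciling the reference's height normalization and its exact placement of the $\log B$ factor with the cleaner hypotheses stated here, and checking that every such reconciliation costs only a constant depending on $m$ and $D$, rather than on the data $\lambda_j$, $b_j$, $A_j$, $B$ themselves.
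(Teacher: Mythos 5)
Your approach coincides with the paper's: the paper offers no independent proof of this proposition, which is simply quoted from the theory of linear forms in logarithms (Proposition 9.21 of \cite{GL326}, with Theorem 4.1 of \cite{BEG} mentioned as an alternative source), so deducing it from that reference with constant bookkeeping is exactly the intended argument. Your normalization check is also right: the hypothesis $\log A_j\ge\max\{\rmh(\alpha_j),\,|\lambda_j|,\,1\}$ is stronger than the reference's condition $\log A_j\ge\max\{\rmh(\alpha_j),\,|\lambda_j|/D,\,1/D\}$, and the discrepancies in the $\log B$ factor cost only a factor depending on $m$ and $D$.
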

 
The following auxiliary lemma will also be used in the proof of Theorem $\ref{thm:Effectif}$. 

\begin{lem}\label{lemma:EquivalenceNormes}
Let $K$ be a number field of degree $d=r_1+2r_2$ and unit group of rank $r=r_1+r_2-1$, where $r_1$ is the number of real embeddings of $K$ into $\R$ and $r_2$ 
is the number of pairs of non–real  embeddings   of   
$K$ into $\C$. Let $\epsilon_1,\ldots,\epsilon_r$ be a basis of the torsion--free part of the group of units $\Z_K^\times$ of $K$. Then there is a constant $\kappa_8>0$ such that, 
for any unit $\varepsilon$ of K written as
$$ 
 \varepsilon=\zeta\epsilon_1^{b_1}\cdots\epsilon_r^{b_r}
 $$
 where $\zeta$ is a root of unity in $K$ and $b_1,\ldots,b_r$ are rational integers, we have
\begin{equation}\label{Equation:EquivalenceNormes}
 \max\{|b_1|,\ldots,|b_r|\}\le \kappa_8 \log \house{\varepsilon}.
\end{equation}
\end{lem}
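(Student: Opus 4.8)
The plan is to use the standard logarithmic embedding of the unit group and exploit the equivalence of norms on the finite-dimensional vector space $\R^{r+1}$ (or on the hyperplane where the units live). First I would fix the $r_1$ real embeddings $\sigma_1,\ldots,\sigma_{r_1}$ and one embedding from each of the $r_2$ conjugate pairs, and for each unit $\eta$ of $K$ define its logarithmic embedding $L(\eta)=(e_1\log|\sigma_1(\eta)|,\ldots)\in\R^{r_1+r_2}$, where $e_i=1$ for real embeddings and $e_i=2$ for complex ones. By Dirichlet's unit theorem, the image $L(\Z_K^\times)$ is a lattice of rank $r=r_1+r_2-1$ inside the trace-zero hyperplane $H=\{(x_1,\ldots,x_{r_1+r_2}): \sum x_i=0\}$, and the vectors $L(\epsilon_1),\ldots,L(\epsilon_r)$ form a basis of this lattice.

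Next, since roots of unity have logarithmic embedding zero, for $\varepsilon=\zeta\epsilon_1^{b_1}\cdots\epsilon_r^{b_r}$ we have the linear relation
$$
L(\varepsilon)=b_1 L(\epsilon_1)+\cdots+b_r L(\epsilon_r).
$$
The key point is that $L(\epsilon_1),\ldots,L(\epsilon_r)$ are linearly independent over $\R$, so the linear map sending $(b_1,\ldots,b_r)\mapsto L(\varepsilon)$ is injective from $\R^r$ onto the lattice's span $H$. The inverse linear map $M\colon H\to\R^r$ is then a well-defined continuous linear operator, and I would apply it to recover
$$
\max\{|b_1|,\ldots,|b_r|\}=\|M\bigl(L(\varepsilon)\bigr)\|_\infty\le \|M\|\cdot\|L(\varepsilon)\|_\infty,
$$
where $\|M\|$ is the operator norm of $M$ with respect to the sup norms; this $\|M\|$ depends only on the chosen basis $\epsilon_1,\ldots,\epsilon_r$ and hence only on $K$, which is exactly the kind of constant we are allowed.

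Finally I would bound $\|L(\varepsilon)\|_\infty$ in terms of $\log\house{\varepsilon}$. Each coordinate is $e_i\log|\sigma_i(\varepsilon)|$, and by definition $|\sigma_i(\varepsilon)|\le\house{\varepsilon}$, so $e_i\log|\sigma_i(\varepsilon)|\le 2\log\house{\varepsilon}$. For a lower bound on the coordinates one uses that $\varepsilon$ is a unit: the product formula gives $\prod_i|\sigma_i(\varepsilon)|^{e_i}=|\rmN(\varepsilon)|=1$, whence $\log|\sigma_i(\varepsilon)|\ge -\sum_{j\ne i}e_j\log|\sigma_j(\varepsilon)|\ge -(d-1)\log\house{\varepsilon}$, giving $\|L(\varepsilon)\|_\infty\le 2(d-1)\log\house{\varepsilon}$ (one may absorb constants freely). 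Combining with the previous display yields $(\ref{Equation:EquivalenceNormes})$ with $\kappa_8=2(d-1)\|M\|$. The main obstacle, if any, is purely bookkeeping: one must make sure the constant $\|M\|$ genuinely depends only on $K$ (through the fixed basis), and that $\house{\varepsilon}\ge 1$ so that $\log\house{\varepsilon}\ge 0$ and the estimate is non-vacuous—this last point follows because $\house{\varepsilon}\ge 1$ always holds for units, as at least one conjugate has modulus $\ge 1$ by the norm-one condition.
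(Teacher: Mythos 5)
Your proof is correct and follows essentially the same route as the paper's: the logarithmic embedding of $\Z_K^\times$ as a rank-$r$ lattice in the trace-zero hyperplane (Dirichlet's theorem, i.e.\ the nonvanishing regulator), followed by equivalence of norms in finite dimension, which is exactly the paper's bounded-inverse/Cramer argument. The only cosmetic differences are your use of the weighted coordinates $e_i\log|\sigma_i(\cdot)|$ and your explicit product-formula estimate $\Vert L(\varepsilon)\Vert_\infty\le 2(d-1)\log\house{\varepsilon}$, which in fact tidies up the paper's slightly overstated claim that the sup norm of the log vector \emph{equals} $\log\house{\varepsilon}$ (small conjugates can make a coordinate as negative as $-(d-1)\log\house{\varepsilon}$, harmless since the norm-equivalence constant absorbs it).
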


 \begin{proof}[of Lemma $\ref{lemma:EquivalenceNormes}$]
 Consider the logarithmic embedding $\lambda$ of $K^\times$ in $\R^{r_1+r_2}$ given by 
 $$
 \lambda (\alpha)=\bigl(\log|\sigma_i(\alpha)|\bigr)_{1\le i\le r_1+r_2}, 
 $$
 where $\sigma_1,\ldots,\sigma_{r_1}$
 are the real embeddings of $K$ into $\R$ and $\sigma_{r_1+1},\ldots,\sigma_{r_1+r_2}$ are the pairwise non--conjugate non--real embeddings of $K$ into $\C$.
 Denote by $\Vert \cdot \Vert_1$ the sup norm on $\R^{r_1+r_2}$, so that, for $\alpha\in K^\times$, we have
 $$
 \Vert \lambda (\alpha)\Vert_1= \log \house {\alpha}.
 $$
 The image of $\Z_K^\times$ under $\lambda$ is a lattice in the hyperplane $H$ of equation 
 $$
 x_1+\cdots+x_{r_1}+ 2x_{r_1+1}+\cdots+2x_{r_1+r_2}=0.
 $$
 A basis of $H$ is $\calB=\{\lambda(\epsilon_1),\ldots,\lambda(\epsilon_r)\}$. 
We denote by $\Vert \cdot \Vert_2$ the sup norm on $H$ with respect to the basis $\calB$. For $\varepsilon\in\Z_K^\times$ written as 
$$
 \varepsilon=\zeta\epsilon_1^{b_1}\cdots\epsilon_r^{b_r},
 $$
 where $\zeta$ is a root of unity in $K$ and $b_1,\ldots,b_r$ are rational integers, we have
 $$
 \Vert \lambda(\varepsilon) \Vert_2= \max\{|b_1|,\ldots,|b_r|\}.
 $$ 
Lemma $\ref{lemma:EquivalenceNormes}$ follows from the equivalence of the norm $\Vert\cdot\Vert_2$ and the restriction to $H$ of the norm $\Vert\cdot\Vert_1$. More explicitly, we deduce 
%an upper bound for $ \max\{|b_1|,\ldots,|b_r|\}$ in terms of $ \log \house {\varepsilon}$ 
$(\ref{Equation:EquivalenceNormes})$ by looking at the absolute values 
of the $b_i$'s obtained via Cramer's formulas for the solutions of the system of linear equations
$$
b_1\log |\sigma_j(\epsilon_1)|+\cdots +b_r\log |\sigma_j(\epsilon_r)|= \log|\sigma_j(\varepsilon)|\quad(j=1,\ldots,
%r),
r_1+r_2),
$$
which has rank $r$ since the regulator of $K$ does not vanish. 
 \end{proof}

 \begin{proof}[of Theorem $\ref{thm:Effectif}$]
 The estimate 
 $(\ref{Equation:ThmEffectif})$ we want to prove is trivial in the case 
 $$
 \left| \varepsilon\alpha-\frac{p}{q}\right| \ge \frac{|p|}{2q},
 $$
 hence we may assume that the number $\gamma:=\varepsilon\alpha q/p$ satisfies 
 $$
 0< |\gamma -1| < \frac{1}{2},
 $$
 and therefore the principal value $\lambda_0$ of the logarithm of $\gamma$ satisfies (see \cite{GL326}, Exercise 1.1.b):
\begin{equation}\label{equation:lambda}
0<|\lambda_0| < 2| \gamma -1|.
 \end{equation}
Let $\epsilon_1,\ldots,\epsilon_r$ be a basis of the torsion--free part of the group $\Z_K^\times$ of units of $K$. Write 
 $$
 \varepsilon=\zeta\epsilon_1^{b_1}\cdots\epsilon_r^{b_r}
 $$
 where $\zeta$ is a root of unity in $K$ and $b_1,\ldots,b_r$ are rational integers. For $1\le j\le r$, select a logarithm $\log\epsilon_j$ of $\epsilon_j$, and set 
 $$
 \lambda_{r+1}= \lambda_0-b_1\log\epsilon_1-\cdots - b_r\log\epsilon_r, 
$$
so that $e^{\lambda_{r+1}}=\zeta\alpha q/p$. 
 We use Proposition $\ref{prop:MinorationCLL}$ with $m=r+1$, $\lambda_j=\log\epsilon_j$ for $1\le j\le r$ and $b_{r+1}= 1$. 
 The number $\kappa_4$  is a constant depending only on $\alpha$ and $K$, and we may choose for $A_1,\ldots,A_r$ constants which also depend only on $\alpha$ and $K$. 
 %The numbers $\kappa_4$, $A_1,\ldots,A_r$ are constants depending only on $\alpha$ and $K$. 
 Moreover, for $A_{r+1}$ and $B$, we take 
 $$
 A_{r+1}=\kappa_5\max\{|p|,\; q,\; 2\}\quad\hbox{and}
 \quad
 B=\kappa_6 \log(\house{\varepsilon}+1),
 $$
 where again $\kappa_5$ and $\kappa_6$ are constants depending only on $\alpha$ and $K$. The upper bound for 
 $\max\{|b_1|,\dots,|b_r|\}$ 
 %$\house{\varepsilon}$
 follows from Lemma \ref{lemma:EquivalenceNormes}. 
 We deduce that there exists a constant $\kappa_7$, depending only on $\alpha$ and $K$, such that 
\begin{equation}\label{equation:formelineaire}
 |\lambda_0| =
 \bigl | b_1\log\epsilon_1+\cdots + b_r\log\epsilon_r+\lambda_{r+1} \bigr | \ge 
\exp\bigl\{-\kappa_7(\log B)\log \max\{|p|,\; q,\; 2\}\bigr\}.
\end{equation}
The result easily follows from $(\ref{equation:lambda})$ and $(\ref{equation:formelineaire})$.
\end{proof}

%%%%%%%%%%%
 \section{Comparison with a result of Corvaja and Zannier}
 %The authors are grateful to Yann Bugeaud who draw their attention to the 
 %onnexion between our results and the Main Theorem of the 
% following paper.

 \medskip
 Denote by $\Vert\cdot\Vert$ the distance to the nearest integer: for $x\in\R$, 
 $$
 \Vert x \Vert :=\min_{n\in\Z} |x-n|.
 $$ 
 Let $\Qbar $ denote the field of complex  numbers which are algebraic  over $\Q$.  
Following \cite{CZ}, call a (complex) algebraic number $\xi$ a {\it pseudo--Pisot} number if 
\\
$(i)$
$|\xi|>1$ and all its conjugates have (complex) absolute value strictly less than $1$;
\\
$(ii)$
$\xi$ has integral trace: $\Tr_{\Q(\xi)/\Q}(\xi)\in \Z$.

The main Theorem of Corvaja and Zannier in \cite{CZ}, whose proof also rests on Schmidt's Subspace Theorem,  can be stated as follows. 

\begin{thm}\label{thm:CZ}
Let $\Gamma\subset\Qbar^\times$ be a finitely generated multiplicative group of algebraic numbers, let $\alpha\in\Qbar^\times$ be a non--zero algebraic number and let $\eta>0$ be fixed. Then there are only finitely many pairs $(q,\varepsilon)\in \Z\times\Gamma$ with $\delta=[\Q(\varepsilon):\Q]$ such that $|\alpha q\varepsilon|>1$, $\alpha q\varepsilon$ is not a pseudo--Pisot number and 
\begin{equation}\label{Equation:PC}
0<
\Vert \alpha q\varepsilon\Vert 
< \frac{1}{e^{\eta \rmh(\varepsilon)} q^{\delta+\eta}}\cdotp 
\end{equation}
\end{thm}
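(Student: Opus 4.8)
The statement is attributed to Corvaja and Zannier, and the excerpt flags that its proof "rests on Schmidt's Subspace Theorem." So my plan is to reduce the approximation inequality to a configuration where the Subspace Theorem applies, namely a system of linear forms in several variables whose product is small at a finite set of places $S$. Let me think through what the natural setup is.

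We have a finitely generated group $\Gamma \subset \overline{\mathbb{Q}}^\times$, a fixed $\alpha$, and we look at pairs $(q,\varepsilon)$ with $\alpha q \varepsilon$ close to an integer. The quantity $\|\alpha q \varepsilon\|$ being small means there is an integer $p$ (the nearest integer) with $|\alpha q \varepsilon - p|$ small. The conjugates of $\alpha q \varepsilon$ are $\alpha^\sigma q \varepsilon^\sigma$, and the failure of $\alpha q\varepsilon$ to be pseudo-Pisot is what allows us to control these. Let me plan the steps.

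First I would enlarge $\Gamma$ to include the conjugates of its generators, pass to a number field $k$ (or a suitable Galois closure) containing $\alpha$, all conjugates, and $\Gamma$, and fix the finite set $S$ of places of $k$ consisting of the archimedean places together with those where the generators of $\Gamma$ or $\alpha$ are non-units. Then $\varepsilon$ and $q\varepsilon$ are $S$-units up to the fixed factor $\alpha$, and the height $\mathrm{h}(\varepsilon)$ controls the $S$-adic size of $\varepsilon$ at every place. The near-integrality condition $\|\alpha q\varepsilon\| < 1/(e^{\eta \mathrm{h}(\varepsilon)}q^{\delta+\eta})$ says that the linear form $\alpha q \varepsilon - p$ is very small archimedeanly, where $p \in \mathbb{Z}$ is the nearest integer; crucially $|p|$ is comparable to $|\alpha q \varepsilon|$ which is $>1$.

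The heart of the argument is to set up linear forms in the Subspace Theorem indexed by the places of $S$ and by the Galois conjugates, so that the product of the norms of the chosen forms evaluated at the vector of $S$-unit coordinates of $\varepsilon$ (together with $q$ and $p$) is bounded by the right-hand side, beating the height to the power of the number of variables. At the distinguished archimedean place one uses the form $X_0 - \alpha q X_\varepsilon$ picking out the small quantity $|\alpha q\varepsilon - p|$; at the remaining places and conjugates one uses trivial forms (coordinate functions), and the product formula together with the $S$-unit structure turns heights into the relevant powers of $q$ and $e^{\mathrm{h}(\varepsilon)}$. The pseudo-Pisot exclusion enters exactly here: if $\alpha q\varepsilon$ were close to an integer and \emph{all} its conjugates were small, it would be (nearly) pseudo-Pisot and the double-counting argument collapses; excluding pseudo-Pisot numbers guarantees that at least one conjugate is large, which is what forces a genuine gap and lets the product estimate undercut the critical exponent needed to invoke the Subspace Theorem.

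I expect the main obstacle to be twofold. The first is the bookkeeping that converts the single archimedean smallness condition and the $S$-unit/height data into a clean product inequality over a system of linear forms whose total degree matches the number of indeterminates, so that the Subspace Theorem yields finitely many proper subspaces. The second, and genuinely delicate, point is handling the finitely many proper subspaces produced by the Subspace Theorem: one must show that infinitely many solutions lying in a fixed subspace force a linear relation among $1$, $q$, and the $S$-unit coordinates of $\varepsilon$, and then argue by descent (inducting on the number of free $S$-unit parameters, or passing to a proper subgroup of $\Gamma$) that such relations either reduce to the pseudo-Pisot case or to a lower-rank instance of the same theorem. Making that descent terminate while keeping the trace/pseudo-Pisot condition coherent under specialization is where the real work lies, and it is precisely the part that the cited paper \cite{CZ} carries out in detail.
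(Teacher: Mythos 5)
First, a point about the benchmark: the paper you are being compared against does not prove this theorem at all. It is quoted verbatim as the main theorem of Corvaja and Zannier, with an explicit pointer to \cite{CZ} for the proof; the surrounding section only compares its strength with Proposition \ref{Proposition:VarianteLiouville}, Theorem \ref{thm:principal} and Theorem \ref{thm:Effectif}. So the only meaningful comparison is with the argument of \cite{CZ} itself. Measured against that, your plan correctly identifies the broad strategy: pass to a field containing $\alpha$, the conjugates and generators of $\Gamma$; choose $S$ so that $\varepsilon$ becomes an $S$-unit; encode the nearest integer $p$ and the smallness of $|\alpha q\varepsilon - p|$ in a distinguished linear form at one archimedean place, with coordinate forms elsewhere; apply the Subspace Theorem; then analyse the exceptional subspaces. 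You also locate the hard part accurately. But locating it is essentially all you do: no system of forms is actually constructed, the exponent bookkeeping against $e^{\eta\rmh(\varepsilon)}q^{\delta+\eta}$ is never verified, and the subspace descent is explicitly delegated back to \cite{CZ}. As a proof this is a plan, not an argument.

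Beyond incompleteness, the one mechanism you do describe in detail is misdescribed, and this is a genuine gap. You claim that excluding pseudo--Pisot numbers ``guarantees that at least one conjugate is large.'' It does not: a number fails to be pseudo--Pisot either because some conjugate has modulus at least $1$ \emph{or} because its trace is not a rational integer. The theorem asserts finiteness for all non--pseudo--Pisot solutions, including those for which every conjugate of $\alpha q\varepsilon$ lies strictly inside the unit disk but the trace is non--integral; for that class your ``large conjugate'' dichotomy gives nothing, and any proof built on it fails. In the actual argument of \cite{CZ}, integrality of the trace is a \emph{conclusion}, not a hypothesis to be leveraged: the degenerate case of the Subspace Theorem produces a linear relation forcing $p$ to coincide with the trace of $\alpha q\varepsilon$, which together with small conjugates makes $\alpha q\varepsilon$ pseudo--Pisot --- and this is precisely why pseudo--Pisot numbers, and only they, must be excluded from the statement. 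Your sketch inverts this logic, so even as an outline it would need that step rebuilt before the rest of the plan could be carried out.
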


The special case $\varepsilon=1$, $\delta=1$ of Theorem $\ref{thm:CZ}$ is a Roth--type estimate. The proof we gave in \S~$\ref{S:ResultatPrincipal}$ relies on our result on Diophantine equations in \cite{LW}, which is a consequence of Schmidt's Subspace Theorem, while the proof of Corvaja and Zannier in \cite{CZ} uses directly Schmidt's fundamental result on linear forms in algebraic numbers. It is likely that an improvement of our result could be achieved by adapting the arguments of  \cite{CZ} -- so one would expect to obtain a refinement of our conclusion which would also include the statement of Theorem $\ref{thm:CZ}$.

However it turns out that in some very particular cases,  Theorem $\ref{thm:CZ}$ is weaker than Liouville's estimate  (Proposition $\ref{Proposition:VarianteLiouville}$), hence weaker than our Theorem $\ref{thm:principal}$. 
Here is an example.  Assume in Theorem $\ref{thm:CZ}$ that  $\Gamma$ is the group of units $\ZK^\times$ of a number field $K$ of degree $d=\delta$ and that  $\alpha\in K$. In this special case, for  $\varepsilon\in\Z_K^\times$, we may replace $\log\house{\varepsilon} $ by $ \rmh(\varepsilon)$ without spoiling the result, since 
$$
\log\house{\varepsilon}\le \rmh(\varepsilon)\le d \log\house{\varepsilon}.
$$  
Hence Theorem $\ref{thm:CZ}$ implies that for any $\eta>0$,  there are only finitely many pairs $(q,\varepsilon)\in \Z\times\Z_K^\times$  such that $|\alpha q\varepsilon|>1$, $\alpha q\varepsilon$ being not a pseudo--Pisot number and 
$$
0<
\Vert \alpha q\varepsilon\Vert 
< \frac{1}{ {\house{\varepsilon} \,}^{\eta} q^{d+\eta}}\cdotp
$$
In other words, if  $|\alpha q \varepsilon | > 1$ with  $\alpha q \varepsilon$  being not a pseudo–Pisot number,  then for all pairs   $(q, \varepsilon) \in \Z \times  \Z_K^\times$ ,   except for finitely many of them, we have 
\begin{equation}\label{equation:CZ}
\Vert \alpha q\varepsilon\Vert  \geq  \frac{1}{ {\house{\varepsilon} \,}^{\eta} q^{d+\eta}}\cdotp
\end{equation}
It may be observed that a more concise form of this statement is 
$$%\begin{equation}\label{equation:CZ}
\liminf   \frac{d\log q + \log \Vert \alpha q\varepsilon\Vert }{\log q +
\log \house{\varepsilon}
%\rmh(\varepsilon)
}\ge  0 \quad\hbox{as}\quad \max\{q,\house{\varepsilon}\}   \longrightarrow +\infty,
%\end{equation}
$$%
where  $(q,\varepsilon)\in \Z\times \Z_K^\times $ with   $|\alpha q\varepsilon|>1$ and  $\alpha q\varepsilon$ being not a pseudo--Pisot number.

%Mentionner que l'on compare $(\ref{equation:CZ})$ avec Liouville 
%$$
%\Vert q\alpha\varepsilon \Vert \ge  \frac{\kappa_1}{q\house\varepsilon)^{d-1}}   (**).
%$$

 In the case where the pairs $(q,\varepsilon)$ 
belong to a set  in which ${\house{\varepsilon}\;}^{d-1}q^{-1}$ is bounded from above, $(\ref{equation:CZ})$ is weaker than the lower bound 
$$
(q \, \house\varepsilon)^{d-1}\Vert q\alpha\varepsilon \Vert \ge \kappa_1
$$
given by Liouville's inequality (Proposition $\ref{Proposition:VarianteLiouville}$), hence it is weaker than the result which one deduces from Theorem $\ref{thm:principal}$. 

For the comparison with  $(\ref{Equation:ThmEffectif})$, let us consider a set of pairs $(q,\varepsilon)$ 
in which $(\log|\varepsilon|)/\log q$ is bounded from above by a positive constant and at the same time $\log \house{\varepsilon} (\log q)^{-1}(\log \log q)^{-1}$ is bounded from below by a positive constant.  In this case one deduces from  Theorems $\ref{thm:Effectif}$
$$
\Vert \alpha q\varepsilon
\Vert
\ge 
\exp\bigl\{
-\kappa_5
\bigl(\log \log(\house{\varepsilon}+2)
\bigr) \log \max\{|q\varepsilon|,\; 2\}
\bigr\}
$$
where $\kappa_5$ is an effectively computable constant depending only on $\alpha$ and $K$. Hence in this special case,  the lower bound for $\Vert \alpha q\varepsilon\Vert$ which we deduce from  $(\ref{Equation:ThmEffectif})$ is a power of $ \log(\house{\varepsilon})$, while 
 Theorem  $\ref{thm:CZ}$   yields a weaker lower bound, namely  a power of $ \house{\varepsilon}$. 
Thus 
 Theorem $\ref{thm:principal}$   sometimes yields  sharper estimates than  Theorems $\ref{thm:Effectif}$ and $\ref{thm:CZ}$ 
 when $q$ is large, 
 Theorem $\ref{thm:Effectif}$ is  effective and may be sharper than  Theorems  $\ref{thm:principal}$ and $\ref{thm:CZ}$  when $\house{\varepsilon}$ is large, while 
 Theorem $\ref{thm:CZ}$ is most often sharper than $\ref{thm:principal}$ and  $\ref{thm:Effectif}$
  for an intermediate range. However, we emphasize the fact that  Theorem $\ref{thm:CZ}$ has a wider scope, even if it happens to be sometimes less strong than other results.

\medskip
We conclude with two selected examples in which we take  $\alpha =1$. 
 
\begin{exa} Consider a cubic number field $K$ with group of units of rank $1$ and let $\epsilon_0 >1$ be  the fundamental unit, so that $\Z_K^\times = \{1, -1\} \times \langle \epsilon_0 \rangle$ and $\house{\epsilon_0}=\epsilon_0$. 
Theorem  $\ref{thm:principal}$  states that for any $\kappa>0$, there are only finitely many $(n,q) \in \N^2$ such that 
$$
\Vert q\epsilon_0^n \Vert \leq  \frac{\kappa}{\epsilon_0^{2n}   q^{2}}\cdotp
$$
This means that the function 
$$
(q,n)\longmapsto  
\epsilon_0^{2n}   q^{2}\Vert q\epsilon_0^n \Vert 
$$ 
tends to infinity as $\max\{q,n\}$ tends to infinity. Liouville's inequality (Proposition $\ref{Proposition:VarianteLiouville}$) gives only a lower bound for $\epsilon_0^{2n}   q^{2}\Vert q\epsilon_0^n \Vert 
$ with an explicit positive constant. The equality $(\ref{Equation:PC})$ cannot be used  because   $q\epsilon_0^n$ is a pseudo-Pisot number. The conclusion of Theorem $\ref{thm:Effectif}$ is $$
\Vert q\epsilon_0^n \Vert \leq  n^{-\kappa_6 n}q^{-\kappa_6 \log n}
$$
for $n\ge 2$, 
which is weaker than the estimates that we deduced from Theorem  $\ref{thm:principal}$. 

For an explicit example, let $D$ be an integer $>1$ and let $\omega = \root 3 \of {D^3 -1}>1$. 
The fundamental unit $>1$ of the cubic field  $\Q(\omega)$ is (see \cite{S1})  $\epsilon_0 = 1/(D-\omega) =D^2 + D\omega + \omega^2$. 
\end{exa}

\begin{exa} Let $K$ be a number field of degree $d$. Assume that there are two independent real units $\epsilon_2>\epsilon_1>1$ in $K$. Since $\epsilon_1 $, $\epsilon_2 $ are multiplicatively independent, the numbers  $\log\epsilon_1 $,  $\log \epsilon_2 $ are linearly independent over $\Q$, hence $\Z \log\epsilon_1 +\Z \log \epsilon_2 $ is a dense subgroup of $\R$ and therefore  the multiplicative  subgroup of $\R_+^\times$ generated by $\epsilon_1 $, $\epsilon_2 $ is dense. Hence there exists a   sequence of units $\varepsilon_n=\epsilon_1^{a_n} \epsilon_2^{-b_n}$ such that $1/2\le \varepsilon_n\le 2$.
The numbers $a_n$ and $b_n$ are positive integers which tend to infinity. Since
$$
|\log \varepsilon_n| =  |a_n\log \epsilon_1-b_n\log\epsilon_2|\le \log 2,
$$
the   limit of the sequence  $a_n/b_n$  is $(\log\epsilon_2)/(\log \epsilon_1 )$. For instance one can take for $a_n/b_n$ the convergents of the continued fraction expansion of  $(\log\epsilon_2)/(\log \epsilon_1 )$. The sequences 
$$
\left(
\frac{\log \house{\varepsilon_n}}
{a_n\log \epsilon_1}
\right)
_{n\ge 1}
\quad\hbox{and}\quad 
\left(
\frac{\log \house{\varepsilon_n}}
{b_n\log \epsilon_2}
\right)
_{n\ge 1}
$$
converge to the positive limit 
$$
\max_{\sigma:K\rightarrow \C}
\left(
\frac{\log |\sigma \epsilon_1|}
{\log \epsilon_1}
 -
 \frac{\log |\sigma \epsilon_2| }
 {\log \epsilon_2}
\right).
$$
%The house $ \house{\varepsilon_n}$  of  $\varepsilon_n$ satisfies 
%$$
%\log \house{\varepsilon_n}=\max\{ a_n\log\epsilon_1,  b_n\log\epsilon_2). 
%$$
For $n$ sufficiently large, we have $\house{\varepsilon_n} \ge e^e$. 
 Liouville's inequality from Proposition $\ref{Proposition:VarianteLiouville}$ is
$$
(q  \,  \house{\varepsilon_n})^{d-1} \Vert q\varepsilon_n\Vert\ge \kappa_1,
$$
and Theorem $\ref{thm:principal}$ yields 
$$
  (q\,  \house{\varepsilon_n})^{d-1} \Vert q \varepsilon_n\Vert   \longrightarrow +\infty
\quad\hbox{as}\quad \max\{q, n\}   \longrightarrow +\infty,
$$ 
while 
Theorem $\ref{thm:Effectif}$ gives the lower bound 
$$
\Vert q\varepsilon_n\Vert \ge  
\exp
\bigl\{ 
-\kappa_7
\bigl ( 
\log \log\house{\varepsilon_n}
\bigr )
 \log \max\{  q,\; 2\}
 \bigr\},
$$
and  Theorem  $\ref{thm:CZ}$ yields 
$$
 \liminf  \frac{d\log q + \log \Vert q\varepsilon_n\Vert }{\log q +\log\house{\varepsilon_n}} \ge  0 
$$
 as $\max\{q, n \}   \longrightarrow +\infty$ with      $  q\varepsilon_n$ being not a pseudo--Pisot number. 
Hence Theorem $\ref{thm:Effectif}$ is sharper, when $n$ is large,
% in terms of $\rmh(\varepsilon_n)$, 
 than the estimate which one deduces from Theorem  $\ref{thm:principal}$ and than  the estimate which one deduces from Theorem $\ref{thm:CZ}$.

For an explicit example, let $D$ be an integer $>1$ and let $\omega = \root 4 \of {D^4 -1}>1$. 
A pair of independent units of the biquadratic number field $\Q(\omega)$  is (see \cite{S2}) 
$$
\epsilon_1=D^2+\omega^2=\frac{1}{D^2-\omega^2}
\quad\hbox{and}\quad
\epsilon_2 =\frac{1}{D-\omega} =D^3 + D^2\omega + D\omega^2+\omega^3.
$$

\end{exa}

\subsection*{Acknowledgements}
The authors are thankful to Francesco Amoroso, K\'{a}lm\'{a}n Gy\H{o}ry  and Yann Bugeaud for their remarks on a preliminary version of this paper.

%%%%%%%%%%%

 \vfill\vfill
 \bigskip
 
\noindent
{\sc Claude Levesque \hfill \hfill \hfill \hfill \hfill \hfill Michel Waldschmidt }

\end{document}